\def\diam{\text{diam}}
\def\dim{\text{dim}}
\def\Vol{\text{Vol}}
\def\inj{\text{inj}}
\newtheorem{thm}{Theorem}[section]
\newtheorem{cor}[thm]{Corollary}
\newtheorem{prop}[thm]{Proposition}
\newtheorem*{claim*}{Claim}
\theoremstyle{definition}
\newtheorem{exmp}{Example}[section]
\theoremstyle{remark}
\newtheorem{rmk}{Remark}[section]
\newtheorem*{rmk*}{Remark}
\newtheorem*{fact*}{Fact}
\title[Three-manifolds with bounded curvature]{Three-manifolds with bounded curvature and uniformly positive scalar curvature}
\author{Conghan Dong}
\address{Mathematics Department, Stony Brook University, NY 11790, United States}
\email{conghan.dong@stonybrook.edu}
\begin{document}
\date{\today}
\begin{abstract}
	In this note, we prove that for a complete noncompact three dimensional Riemannian manifold with bounded sectional curvature, if it has uniformly positive scalar curvature, then there is a uniform lower bound on the injectivity radius.
\end{abstract}
\keywords{$3$-manifolds, positive scalar curvature, collapsing, injectivity radius}
\maketitle

\section{Introduction}
A well-known problem posed by Yau (see Problem Section in \cite{Yau82}) is how to classify $3$-manifolds admitting complete Riemannian metrics of positive scalar curvature up to diffeomorphism. In the case of closed $3$-manifolds, it has been resolved by Schoen-Yau \cite{SchoenYau79}, Gromov-Lawson \cite{GromovLawson83}, Hamilton \cite{Hamilton82} and Perelman \cite{Perelman2002entropy, Perelman2003surgery, Perelman2003finite}. But for open $3$-manifolds, this problem remains wide open. Some recent progress were made under the assumption of uniformly positive scalar curvature. Particularly, Chang-Weinberger-Yu \cite{CWY10} classified complete $3$-manifolds with uniformly positive scalar curvature and finitely generated fundamental group. Bessieres-Besson-Maillot \cite{BBM11} classified complete $3$-manifolds with uniformly positive scalar curvature and bounded geometry. Here the bounded geometry means the sectional curvature is bounded and the injectivity radius is bounded away from zero. 

In this note, we make further progress towards Yau's problem and show that bounded curvature and uniformly positive scalar curvature on complete open $3$-manifolds are enough to derive a lower bound on the injectivity radius.

\begin{thm}\label{main-thm}
	Assume $(M^3, g)$ is a smooth three dimensional complete noncompact Riemannian manifold. If the sectional curvature is bounded by $|K_g| \leq \Lambda $ and the scalar curvature is bounded below by $R_g \geq 1$, then the injectivity radius has a uniform lower bound. In other words, there exists a uniform positive number $r_0=r_0(\Lambda ) >0$, depending only on $\Lambda $, such that for any $x \in M$, the injectivity radius at $x$ is bounded from below by $r_0$.
\end{thm}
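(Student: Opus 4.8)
The plan is to argue by contradiction via the structure theory of collapse with bounded curvature (Cheeger--Gromov, Cheeger--Fukaya--Gromov), combined with the favorable sign of the fiber terms in the O'Neill scalar-curvature identity and with the Schoen--Yau obstruction to incompressible tori. Suppose no such $r_0$ exists; then there are points $x_i\in M$ with $\inj(x_i)\to 0$, and since the injectivity radius is positive and continuous on the complete manifold $M$, the $x_i$ cannot subconverge and hence leave every compact set. I would first run a short-time Ricci flow (which exists on $(M,g)$ by Shi's theorem, the curvature being bounded) and replace $g$ by $g(t_0)$ for a tiny $t_0=t_0(\Lambda)$: this preserves $|K|\le 2\Lambda$, preserves (and can only increase) $R_g\ge 1$ by the maximum principle, keeps $\inj(x_i)\to 0$, and in addition yields $|\nabla^k\Rm|\le C_k(\Lambda)$ for all $k$; this uniform bounded geometry is what lets one differentiate the collapsing picture without losing control of the scalar curvature.

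Next, for $i$ large the balls $B(x_i,\epsilon_0)$, with $\epsilon_0=\epsilon_0(\Lambda)$ the Cheeger--Gromov collapse threshold, are $\delta_i$-collapsed with $\delta_i\to 0$. By the local Cheeger--Fukaya--Gromov theory together with the graph-manifold description of collapsed $3$-manifolds, there is a neighborhood $U_i\ni x_i$ carrying a Seifert fibration $f_i\colon U_i\to Y_i$ with circle fibers of length $o(1)$ over a $2$-orbifold $Y_i$ (the degenerate cases $\dim Y_i\le 1$ are treated separately: $\dim Y_i=0$ would make $U_i$, hence the connected $M$, a closed infranilmanifold, which carries no $R\ge 1$ metric, while $\dim Y_i=1$ makes $M$ a torus- or Klein-bottle-bundle over a circle or interval, handled by a warped-product computation). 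After an $o(1)$ modification of the metric one takes $f_i$ to be a Riemannian submersion with totally geodesic fibers up to $o(1)$, so the O'Neill identity reads $R_{U_i}=R_{Y_i}\circ f_i-|A_i|^2+o(1)$ with $A_i$ the O'Neill integrability tensor. The crucial point is that $-|A_i|^2\le 0$, so the base $2$-orbifold satisfies $R_{Y_i}\ge R_{U_i}-o(1)\ge 1-o(1)$, i.e.\ its Gauss curvature is $\ge\tfrac12-o(1)$.

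By Bonnet--Myers --- valid for orbifolds and, since $\partial Y_i$ is compact, also in the presence of boundary --- each $Y_i$ is compact of diameter $\le 5$, and by orbifold Gauss--Bonnet it has positive orbifold Euler characteristic; consequently the Seifert piece $U_i$ is compact, and if $\partial U_i=\varnothing$ then $U_i=M$, contradicting noncompactness. When $\partial U_i$ is a nonempty union of tori, each such torus is compressible in $M$: an incompressible one would bound, by the Schoen--Yau least-area argument, a stable minimal torus $\Sigma$, and the stability inequality together with $\int_\Sigma K_\Sigma=0$ forces $\int_\Sigma(\tfrac12 R_g+\cdots)\le 0$, contradicting $R_g\ge1$. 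Compressing all boundary tori exhibits each thin component as capped off by solid tori/balls into \emph{closed} $3$-manifolds which, being Seifert-fibered over orbifolds of positive Euler characteristic, are spherical space forms or $S^2$-bundles over $S^1$ carrying (off a ball) the restriction of $g$. A quantitative Gromov--Lawson-type estimate then bounds the collapse of such a piece $N$: realizing $N$ inside the $|K|\le\Lambda$ manifold $M$ forces the connect-sum neck, hence $N$ itself, to contain a ball of radius $\ge c(\Lambda)$, so $\inj\ge c(\Lambda)$ on it. Since $\inj(x_i)\to 0$ with $x_i\to\infty$ would require the thin part to accumulate near infinity, i.e.\ infinitely many increasingly collapsed such pieces, this is the contradiction, and $r_0(\Lambda):=\min\{\epsilon_0(\Lambda),c(\Lambda)\}$ works.

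The two hard parts are: first, making the Cheeger--Fukaya--Gromov fibration sharp enough to justify the O'Neill estimate with \emph{uniform} $o(1)$ error terms --- the preliminary Ricci-flow smoothing is aimed precisely at this --- and in particular handling collapsing torus or Heisenberg-nilmanifold fibers, where the fiber's own scalar curvature enters and the bare sign of $-|A_i|^2$ no longer closes the argument by itself; and second, the $3$-manifold-topology endgame, upgrading the purely local Seifert structure with compressible torus boundary to the global statement that a noncompact manifold with these bounds cannot let its thin part escape to infinity. The second is where noncompactness is genuinely used: the compact Berger spheres show the statement fails outright without it.
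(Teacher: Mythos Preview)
Your overall architecture---contradiction via collapse, then a PSC obstruction---matches the paper's, but the PSC input you invoke has real gaps. First, the O'Neill identity you write, $R_{U_i}=R_{Y_i}\circ f_i-|A_i|^2+o(1)$, is only valid for totally geodesic fibers; the full formula contains the fiber mean-curvature terms $-|T|^2-|N|^2-2\check\delta N$, and the divergence $\check\delta N$ has no sign. The CFG invariant metric makes $f_i$ a Riemannian submersion but does \emph{not} make the $S^1$-orbits geodesic, so you cannot drop these terms and the conclusion $R_{Y_i}\ge 1-o(1)$ is unjustified. Second, even granting that inequality, Bonnet--Myers requires completeness: it should be applied to the complete pointed Gromov--Hausdorff limit $X$ (which is noncompact, yielding the contradiction immediately), not to the bounded local bases $Y_i$; your subsequent detour through compressible boundary tori and a ``quantitative Gromov--Lawson-type estimate'' is then both unnecessary and itself unsubstantiated (compressing a torus does not in general produce a solid-torus cap, and the claimed neck-radius bound is not a standard result). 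Third, the $\dim X=1$ case is not handled: the collapse produces a long $T^2\times(-1,1)$ band with $R\ge 1$, and the metric there is \emph{not} a warped product (the fiber metric $h_{ij}(r)$ varies by more than a scalar factor), so the ODE you have in mind does not apply. Ruling out such bands is exactly Gromov's torical band inequality---you yourself flag that here ``the bare sign of $-|A_i|^2$ no longer closes the argument.''

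The paper bypasses all of this by taking Gromov's $\tfrac{2\pi}{n}$ width inequality as the sole PSC ingredient. For $\dim X=1$ it gives the torical-band bound directly. For $\dim X=2$ one places an annular band $\{r_1<\varphi<r_2\}$ of width $>C_0$ inside a large Seifert-fibered neighborhood $\Omega$; the width inequality then forces separating $2$-spheres, but a Seifert fibered space with boundary is $S^2$-irreducible, so each sphere bounds a ball in $\Omega$, and one such ball must contain $p$ together with a smaller Seifert piece $\Omega'$ whose regular fiber is homotopically nontrivial in $\Omega$ yet null in the ball---contradiction. No curvature estimate on the base orbifold, no Schoen--Yau minimal torus, and no connect-sum bookkeeping are needed.
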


This theorem together with the main result in \cite{BBM11} gives the following corollary.

\begin{cor}
	Let $M$ be a connected orientable $3$-manifold which carries a complete metric of bounded sectional curvature and uniformly positive scalar curvature, then there is a collection $\mathcal{F}$ of spherical space forms with finitely many diffeomorphim type such that $M$ is a (possibly infinite) connected sum of copies of $S^2 \times S^1$ with members of $\mathcal{F}$.
\end{cor}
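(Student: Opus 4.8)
\emph{Strategy.} The plan is to argue by contradiction, using the structure theory of collapse under two-sided curvature bounds and exploiting uniformly positive scalar curvature as the obstruction to collapse. Suppose the conclusion fails, so there are $x_i \in M$ with $\rho_i := \inj(x_i) \to 0$. Fix the Cheeger--Fukaya--Gromov collapsing (``Margulis'') constant $\varepsilon_0 = \varepsilon_0(\Lambda) > 0$, shrunk if necessary so that also $2\varepsilon_0 < \pi/\sqrt{\Lambda}$. For $i$ large, $\rho_i < \varepsilon_0$; since $|K_g| \le \Lambda$ bounds conjugate radii below by $\pi/\sqrt{\Lambda}$, Klingenberg's lemma yields $\rho_i = \tfrac12\ell_i$ where $\ell_i$ is the length of a shortest geodesic loop at $x_i$. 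Hence $x_i$ lies in the collapsed part $M_{\varepsilon_0} := \{\, x : \inj(x) < \varepsilon_0 \,\}$, with arbitrarily short geodesic loops nearby.

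First I would invoke the theory of collapsing with bounded curvature: $M_{\varepsilon_0}$ admits a mixed nilpotent Killing structure of positive rank, compatible with $g$ up to an error that tends to $0$ along the collapse. In dimension three this gives a saturated neighborhood of each component of $M_{\varepsilon_0}$ the structure of a graph manifold, decomposed along incompressible tori into Seifert pieces fibered by short circles over $2$-orbifolds and pieces fibered by tori over $1$-manifolds, with the metric almost invariant under the corresponding local circle and torus actions.

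Next I would bring in $R_g \ge 1$ to rule out these pieces. On a Seifert piece, pass to the local Riemannian submersion limit $\pi\colon N^3 \to B^2$ with one-dimensional, nearly totally geodesic fibers; O'Neill's formula gives $R_B \ge R_N + |A|^2 - o(1) \ge 1 - o(1)$, so the base $2$-orbifold $B$ is complete with Gauss curvature bounded below by a fixed positive constant, hence is compact of bounded diameter by Bonnet--Myers; therefore the Seifert piece is a circle bundle over a compact base, in particular compact, and since $M$ is connected and noncompact it is not closed, so it is attached to the rest of $M$ along thin tori (the fiber circle having length $\approx \varepsilon_0$ along the frontier of $M_{\varepsilon_0}$). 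On a torus-fibered piece, the local $T^2$-action together with $R_g \ge 1$ would yield, on a cover or in the limit, a complete metric with scalar curvature $\ge 1$ on $T^2 \times \R$, on a $T^2$-bundle over $\R$, or on a closed infra-nilmanifold summand, contradicting the enlargeability obstruction to uniformly positive scalar curvature; so no such piece occurs. Thus $M_{\varepsilon_0}$ is a disjoint union of compact Seifert pieces $P_\alpha$, each fibered by circles over a compact base with positive Gauss curvature, thin in its interior and glued to the rest of $M$ along thin tori.

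Because each $P_\alpha$ is compact, $\inf_{P_\alpha}\inj > 0$, so $\rho_i \to 0$ forces the $x_i$ to visit infinitely many distinct, ever-thinner pieces, which must therefore escape every compact subset of $M$. To conclude, I would derive a contradiction from the interaction of these thin pieces with the ambient bound $|K_g|\le\Lambda$: after capping off its boundary tori and using the sphere/JSJ decomposition, each such piece lies in a closed summand $N_i$ of $M$, which --- being a summand of a positive-scalar-curvature $3$-manifold --- admits positive scalar curvature, and $N_i$ is separated from the end of $M$ by an embedded $2$-sphere $\Sigma_i$ whose connected-sum neck has curvature comparable to $|\Sigma_i|^{-1}$, so $|\Sigma_i| \ge c(\Lambda) > 0$; on the other hand $N_i$ minus a ball is entirely $\varepsilon_0$-collapsed ($\inj < \varepsilon_0 < \pi/(2\sqrt\Lambda)$ throughout), and a Gauss--Bonnet/monotonicity estimate shows that a region with injectivity radius this small contains no embedded essential $2$-sphere of area $\ge c(\Lambda)$ --- a contradiction. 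I expect this last paragraph to be the main obstacle: one must upgrade the Cheeger--Fukaya--Gromov structure to a graph-manifold picture that is uniform along the escaping sequence of collapsed regions, and then control precisely how each thin piece is attached to $M$ --- equivalently, rule out that a complete noncompact $3$-manifold with $|K_g|\le\Lambda$ and $R_g\ge 1$ has an end built out of ever-thinner graph-manifold bubbles. The hypotheses enter decisively in exactly two places: the ``no short neck'' principle (which uses $|K_g|\le\Lambda$) and the scalar-curvature input of the previous paragraph (positivity of the Gauss curvature of $B$, and the exclusion of the torus-fibered pieces).
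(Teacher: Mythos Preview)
Your proposal does not address the stated corollary at all. The corollary is a \emph{classification} statement: that $M$ is a (possibly infinite) connected sum of $S^2\times S^1$'s and spherical space forms of finitely many types. The paper's proof is a one-line reduction: Theorem~\ref{main-thm} supplies a uniform injectivity-radius lower bound, so the metric has bounded geometry, and then the main result of Bessi\`eres--Besson--Maillot \cite{BBM11} (which classifies complete orientable $3$-manifolds with uniformly positive scalar curvature and bounded geometry) gives exactly the stated conclusion. Your write-up never mentions \cite{BBM11} or any classification; instead it is an attempt to reprove Theorem~\ref{main-thm} itself.

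Even read as an alternative proof of Theorem~\ref{main-thm}, the proposal has a genuine gap that you yourself flag: the final step. After reducing to compact Seifert pieces $P_\alpha$ you argue that the $x_i$ must escape to infinitely many ever-thinner pieces, and then try to derive a contradiction from a ``no short neck'' principle relating $|K_g|\le\Lambda$ to the area of separating spheres. This step is heuristic as written (the claimed Gauss--Bonnet/monotonicity estimate is not supplied, and the assertion that connected-sum necks have curvature comparable to $|\Sigma_i|^{-1}$ is a statement about a particular metric, not the given one). By contrast, the paper's proof of Theorem~\ref{main-thm} avoids this entirely: it takes a pointed Gromov--Hausdorff limit, splits into the cases $\dim X=1$ and $\dim X=2$, and in each case produces a Riemannian band of arbitrarily large width (a torical band when $\dim X=1$, and a band inside a Seifert fibered piece when $\dim X=2$), which directly contradicts Gromov's $\frac{2\pi}{n}$-width inequality for bands with $R\ge 1$. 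The $S^2$-irreducibility of Seifert fibered spaces with boundary then finishes the $\dim X=2$ case. This is both shorter and free of the uniformity issues you anticipate.
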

\begin{rmk}
	Jian Wang recently claimed a result on classification of open three manifolds carrying uniformly positive scalar curvature in \cite{Wang22}.
\end{rmk}

Also, we have the following improvement of the main result in \cite{BBMM20}.

\begin{cor}
	The moduli space of complete Riemannian metrics of bounded curvature
and uniformly positive scalar curvature on an orientable $3$-manifold is path-connected or empty.
\end{cor}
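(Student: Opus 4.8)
The plan is to reduce this corollary to the main theorem of \cite{BBMM20} by upgrading a bare curvature bound to a bounded-geometry bound via Theorem~\ref{main-thm}. Fix an orientable $3$-manifold $M$ and let $\mathcal{M}$ denote the space of complete Riemannian metrics on $M$ with bounded sectional curvature and uniformly positive scalar curvature, equipped with the same topology as in \cite{BBMM20}. If $\mathcal{M}=\varnothing$ there is nothing to prove, so I assume $g_0,g_1\in\mathcal{M}$ and aim to join them by a path in $\mathcal{M}$.

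First I would normalize the scalar curvature by rescaling. For $i=0,1$ choose $\kappa_i>0$ with $R_{g_i}\ge\kappa_i$ and set $\tilde g_i:=\kappa_i g_i$, so that $R_{\tilde g_i}\ge 1$ while $|K_{\tilde g_i}|\le\kappa_i^{-1}\sup|K_{g_i}|<\infty$. The rescaling path $s\mapsto\bigl(1-s+s\kappa_i\bigr)g_i$, $s\in[0,1]$, joins $g_i$ to $\tilde g_i$ and lies entirely in $\mathcal{M}$, since every positive multiple of a metric in $\mathcal{M}$ is again complete with bounded curvature and uniformly positive scalar curvature. Put $\Lambda:=\max_i\kappa_i^{-1}\sup|K_{g_i}|$; then both $\tilde g_0$ and $\tilde g_1$ satisfy $|K|\le\Lambda$ and $R\ge 1$, so by Theorem~\ref{main-thm} both have $\inj\ge r_0(\Lambda)>0$. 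In particular $\tilde g_0$ and $\tilde g_1$ are complete metrics of bounded geometry and uniformly positive scalar curvature, hence they lie in the moduli space to which the main result of \cite{BBMM20} applies.

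Applying \cite{BBMM20} produces a path from $\tilde g_0$ to $\tilde g_1$ through complete metrics of bounded geometry and uniformly positive scalar curvature; since bounded geometry entails bounded sectional curvature, this path also lies in $\mathcal{M}$. Concatenating it with the two rescaling paths constructed above yields a path in $\mathcal{M}$ from $g_0$ to $g_1$, which proves that $\mathcal{M}$ is path-connected.

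I expect the only delicate point to be bookkeeping rather than a genuine obstacle: one must check that the topology on $\mathcal{M}$ agrees with the one used in \cite{BBMM20}, so that the connecting path supplied there is continuous as a path in $\mathcal{M}$, and that the construction in \cite{BBMM20} indeed remains within the bounded-geometry and uniformly-positive-scalar-curvature locus along its entire length. If the hypotheses of \cite{BBMM20} require geometry bounds that are uniform in the path parameter, this is automatic here, since the lower bound $r_0$ in Theorem~\ref{main-thm} depends only on $\Lambda$, which we have fixed once and for all.
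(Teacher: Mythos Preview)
Your argument is exactly the one the paper intends: the corollary is stated there without proof, merely as an ``improvement of the main result in \cite{BBMM20}'', so the implicit argument is precisely your reduction---rescale, invoke Theorem~\ref{main-thm} to upgrade bounded curvature to bounded geometry, and then apply \cite{BBMM20}. The only small omission is that Theorem~\ref{main-thm} is stated for \emph{noncompact} $M$, so when $M$ is closed you should note separately that any smooth metric on a compact manifold already has positive injectivity radius, making the bounded-geometry hypothesis of \cite{BBMM20} automatic in that case.
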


By regularity results in \cite{Anderson90}, the main theorem implies the following compactness result.

\begin{cor}
	For any sequence of pointed complete noncompact Riemannian $3$-manifolds $(M_i^3, g_i, p_i)$ with uniformly bounded curvature and uniformly positive scalar curvature, up to a subsequence, they converge locally in $C^{1, \alpha }$-topology for any $0<\alpha <1.$
\end{cor}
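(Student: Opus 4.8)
The plan is to deduce this from the Cheeger--Gromov--Anderson compactness theorem, with Theorem~\ref{main-thm} supplying the one missing hypothesis, namely a uniform lower bound on the injectivity radius.

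By assumption there are constants $\Lambda, \kappa > 0$ with $|K_{g_i}| \le \Lambda$ and $R_{g_i} \ge \kappa$ for all $i$. Rescaling each metric by the fixed factor $\kappa$ (which does not affect pointed $C^{1,\alpha}_{loc}$ convergence and only replaces the curvature bound by $\Lambda/\kappa$), we may assume $R_{g_i} \ge 1$ and $|K_{g_i}| \le \Lambda$ for all $i$. Since each $(M_i^3, g_i)$ is complete and noncompact, Theorem~\ref{main-thm} then yields $\inj_{g_i}(x) \ge r_0 = r_0(\Lambda) > 0$ for every $x \in M_i$, with $r_0$ independent of $i$.

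We are now in the setting of Anderson's regularity theory \cite{Anderson90}: a uniform two-sided bound on the sectional curvature (in particular on the Ricci curvature) together with a uniform positive lower bound $r_0$ on the injectivity radius produces, around each point, harmonic coordinate charts on geodesic balls of a fixed radius $\rho = \rho(\Lambda, r_0) > 0$ in which the metric coefficients enjoy uniform $C^{1,\alpha}$ bounds for every $\alpha \in (0,1)$, depending only on $\Lambda$, $r_0$, and $\alpha$. Covering each ball $B_{g_i}(p_i, k)$, $k$ a positive integer, by a controlled number of such charts and running a diagonal Arzel\`a--Ascoli argument, one extracts a subsequence converging in the pointed $C^{1,\beta}_{loc}$ Cheeger--Gromov topology, for every $\beta \in (0,1)$, to a complete pointed Riemannian $3$-manifold $(M_\infty^3, g_\infty, p_\infty)$ with $g_\infty \in C^{1,\alpha}_{loc}$ for all $\alpha \in (0,1)$.

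There is no genuine obstacle beyond invoking these now-standard results; the entire substance of the corollary is contained in the injectivity radius estimate of Theorem~\ref{main-thm}, whose role here is precisely to exclude the collapsing that would otherwise prevent the extraction of a $C^{1,\alpha}$ limit.
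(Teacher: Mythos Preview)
Your proposal is correct and follows exactly the approach indicated in the paper: the paper gives no proof beyond the sentence ``By regularity results in \cite{Anderson90}, the main theorem implies the following compactness result,'' and you have simply spelled out how Theorem~\ref{main-thm} supplies the injectivity radius bound needed to invoke Anderson's compactness theorem.
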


Another corollary is that $3$-manifolds with uniformly positive scalar curvature and bounded curvature have at least linear volume growth.

\begin{cor}
	Assume $(M^3, g)$ is a complete noncompact $3$-manifold satisfying that the sectional curvature is bounded by $|K_g| \leq \Lambda $ and the scalar curvature is bounded below by $R_g \geq 1$. Then there is a positive constant $c=c(\Lambda )>0$, depending only on $\Lambda $, such that for any point $p \in M$, 
	$$
	\liminf_{r\to \infty} \frac{\Vol_gB(p, r) }{r} \geq c.
	$$ 
\end{cor}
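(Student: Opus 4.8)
The plan is to combine Theorem~\ref{main-thm} with an elementary ball-packing argument along a geodesic ray; once the injectivity radius bound is in hand, the statement is essentially standard. First I would record the consequence of Theorem~\ref{main-thm}: there is $r_0 = r_0(\Lambda) > 0$ with $\inj(x) \ge r_0$ for every $x \in M$. Then I would fix a scale $s_0 := \min\{\, r_0/2,\ \pi/(2\sqrt{\Lambda})\,\}$, which depends only on $\Lambda$ (here $\Lambda > 0$ is automatic, since $|K_g| \le \Lambda$ and $R_g \ge 1$ cannot both hold when $\Lambda = 0$), and claim a uniform volume lower bound $\Vol_g B(x, s_0) \ge v_0 = v_0(\Lambda) > 0$ valid for all $x$. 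This follows from volume comparison: since $s_0 < \inj(x)$ the ball $B(x, s_0)$ is a normal ball containing no conjugate points, and since $s_0 \le \pi/(2\sqrt{\Lambda})$ and $K_g \le \Lambda$, the Bishop--G\"unther inequality bounds $\Vol_g B(x, s_0)$ from below by the volume $v_0$ of an $s_0$-ball in the $3$-dimensional space form of constant curvature $\Lambda$.

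Next I would produce, for a fixed $p \in M$, a unit-speed minimizing geodesic ray $\gamma \colon [0,\infty) \to M$ with $\gamma(0) = p$. This is the standard construction for complete noncompact manifolds: take $q_i$ with $d(p, q_i) \to \infty$, take unit-speed minimizing geodesics from $p$ to $q_i$, extract a convergent subsequence of initial velocities, and check that the resulting geodesic is minimizing on every compact interval; in particular $d(\gamma(a), \gamma(b)) = |a-b|$ for all $a, b \ge 0$.

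Then, for $r > 2 s_0$, I would set $N = \lfloor (r - s_0)/(2 s_0) \rfloor$ and consider the points $x_k = \gamma(2 k s_0)$, $k = 0, \dots, N$. Since $d(x_j, x_k) = 2|j - k| s_0 \ge 2 s_0$ for $j \ne k$, the open balls $B(x_k, s_0)$ are pairwise disjoint, and each is contained in $B(p, 2 N s_0 + s_0) \subseteq B(p, r)$; hence $\Vol_g B(p, r) \ge (N+1)\, v_0 \ge (r - s_0)\, v_0 / (2 s_0)$. Dividing by $r$ and letting $r \to \infty$ yields $\liminf_{r \to \infty} \Vol_g B(p,r)/r \ge v_0/(2 s_0) =: c(\Lambda) > 0$.

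There is no real obstacle here beyond correctly invoking Theorem~\ref{main-thm}; the only points needing care are (i) choosing $s_0$ below \emph{both} the uniform injectivity radius and the conjugate-point scale $\pi/\sqrt{\Lambda}$ so that G\"unther's comparison applies, and (ii) using that $\gamma$ is globally --- not merely locally --- minimizing, which is what makes the balls $B(x_k, s_0)$ pairwise disjoint. All constants produced depend only on $\Lambda$.
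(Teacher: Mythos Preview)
Your proof is correct and follows essentially the same route as the paper: invoke Theorem~\ref{main-thm} for a uniform injectivity-radius lower bound, then pack disjoint small balls along a minimizing geodesic and use G\"unther comparison to bound their volumes from below. The only cosmetic differences are that the paper uses a minimizing segment from $p$ to a point on $\partial B(p,r)$ rather than a ray, and spaces balls of radius $r_0$ at distance $3r_0$ rather than your $s_0$--$2s_0$ scheme; your explicit choice $s_0 \le \pi/(2\sqrt{\Lambda})$ is a slightly more careful bookkeeping of the G\"unther hypothesis than the paper records.
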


\begin{proof}
	For any $r \gg 1$, and $q \in \partial B(p, r)$, choose a geodesic segment $\gamma: [0, r]\to M $ between $p$ and $q$. Let $r_0$ be the lower bound on the injectivity radius given in Theorem \ref{main-thm}. Choose a partition of the interval $[0, r]$ by $0=t_0 < t_1 < \cdots < t_{k+1} \leq r$ with $t_{i+1}=t_i + 3r_0$ for $0\leq i \leq k$ and $r< t_{k+1}+ 3r_0$. Then the geodesic balls $\{ B( \gamma (t_i), r_0)\} _{i=0}^k$ are disjoint and all included in $B(p, r)$. Note that $k \geq \frac{r}{3r_0}$.

	For any point $x \in M$, since the injective radius at $x$ is bigger than $r_0$ and $K_g \leq \Lambda $, for $r_1:= \frac{1}{2}\min \{ r_0, \frac{\pi}{ \sqrt{\Lambda } }\} $, we have a polar coordinate such that $g= dr^2+ g_r$ in $B(x, r_1)$. By the Hessian comparison theorem (\cite[Chapter 6, Corollary 2.4]{Petersen98}), in $B(x, r_1)$ we have
	$$
	(g_r(\partial_i, \partial_j) ) \geq \frac{1}{\Lambda }\sin ^2 \sqrt{\Lambda } r \cdot I_2
	$$
	as $2 \times 2$-matrix, where $\partial _i = d\exp _x(\partial \theta _i)$ for a coordinate $\{\theta _i\} $ of the unit sphere in $T_qM$. Denote by $g_\Lambda = dr^2 + \frac{1}{\Lambda }\sin ^2 \sqrt{\Lambda } r ds_2^2$ the standard metric on the $3$-sphere with constant curvature $\Lambda $.
	So $$\Vol_g B(x, r_0) \geq \Vol_g B(x, r_1)\geq \Vol_{g_\Lambda }B(r_1)=: v_{\Lambda }(r_0).$$ 
	
	Together with above arguments, we have $$
	\Vol_g B(p, r) \geq \sum_{i=0}^k \Vol_g B(\gamma (t_i), r_0) \geq r\cdot \frac{v_\Lambda (r_0)}{3r_0}.
	$$ 
	It's done by taking $c= \frac{v_\Lambda (r_0)}{3r_0}$.
\end{proof}

\begin{rmk}
	Yau \cite{Yau76} proved that any complete noncompact manifolds with nonnegative Ricci curvature have at least linear volume growth. In general, the volume growth is not uniform. Munteanu-Wang \cite{MunteanuWang22} proved that for a complete $3$-manifold with nonnegative Ricci curvature and uniformly positive scalar curvature, the volume has at most uniform linear growth. So in the case of a complete noncompact $3$-manifold $(M^3, g)$ with $0 \leq Ric_g \leq \Lambda $ and $R_g \geq 1$, the volume of a geodesic ball has uniformly linear growth. That is, for some uniform constants $c_1,c_2>0$, as $r\to \infty$,
	$$
	c_1 r \leq \Vol_gB(p,r) \leq c_2 r.
	$$ 
\end{rmk}

Now we explain the ideas in the proof of the main theorem and give some remarks. The proof is by contradiction, and involves two main ingredients: the structure of collapsing manifolds with bounded curvature, which has been intensively studied, see works \cite{CheegerGromov86, Fukaya87, Fukaya88, CFG92}; and Gromov's width inequality \cite{Gromov18, Gromov19}.

Let us assume by contradiction that, up to a double covering of orientations, there is a sequence of three dimensional complete noncompact orientable Riemannian manifolds $(M_i^3, g_i, p_i)$ with uniformly bounded curvature and uniformly positive scalar curvature, which, up to a subsequence, converges to a complete Alexandrov space $(X, d, o)$ in the pointed Gromov-Hausdorff topology, and the injectivity radius $\inj(p_i)$ at $p_i$ converges to $0$. 

By our assumptions, the sequence is collapsing and $X$ can't be a point, i.e. $\dim X = 1$ or $2$. The theory of collapsing with bounded curvature gives a symmetric structure around sufficiently collapsed part. Roughly speaking, if $\dim X=2$, there is a wide Seifert fibered space with boundary around $p_i$ in $M_i$, see Proposition \ref{seifert}; if $\dim X=1$, there is a long torical band in $M_i$, see Proposition \ref{dimX=1}. The noncompact assumption ensures that we can find a Seifert fibered space with any arbitrary
width or a torical band whose length is large enough.

But Gromov's width inequality tells us that uniformly positive scalar curvature on a Riemannian band imposes a uniform upper bound on the width, except that there are spheres separating this band, see Propositions \ref{torical-band} and \ref{3-dim-width-ineq}. When $\dim X=1$, this immediately gives a contradiction. When $\dim X=2$, note that a Seifert fibered space with boundary is $S^2$-irreducible, so spheres can not separate the wide Seifert fibered space in the sufficiently collapsed part, which also gives a contradiction (see Section \ref{main-proof} for more details).

\begin{rmk}
	The following example shows that the assumption on the upper bound of sectional curvature is necessary. 
\end{rmk}

\begin{exmp}
	Consider the warped product metric $dr^2 + \rho (r)^2 ds_2^2$ on $\mathbb{R}\times S^2$. Then the scalar curvature is given by 
	$$
	R= -\frac{4\ddot{\rho}}{\rho } + \frac{2-2\dot{\rho} ^2}{\rho ^2},
	$$ 
	and the sectional curvature $K$ is between the values $-\frac{\ddot{\rho }}{\rho }, \frac{1-\dot{\rho }^2}{\rho^2 }$. See \cite{Petersen98} for computations. Define
	$$\rho (r)=
	\begin{cases}
		f(r), & |r| \leq R_0\\
		\frac{1}{|r|}, & |r| \geq R_0,
	\end{cases}
	$$
	where $$f(r)= \frac{3}{8R_0^5}r^4 - \frac{5}{4R_0^3}r^2 + \frac{15}{8R_0}.$$ Then $\rho $ is a positive $C^2$-function, and for a fixed $R_0 \geq 100$, it's easy to see that $R >1$, and $K \geq -\frac{C}{R_0^2}$. A smooth modification of $\rho $ gives a metric with sectional curvature bounded from below and uniformly positive scalar curvature on $\mathbb{R}\times S^2$. But the injectivity radius converges to $0$ and the curvature upper bound blows up as $|r|\to \infty$.
\end{exmp}

\begin{rmk}
	The main theorem still holds for closed $3$-manifolds with big diameter by the same proof. That is, there exists a uniform constant $D_0>0$, coming from Gromov's width inequality (e.g. $D_0=\frac{4 \sqrt{6} \pi}{3}+1$) , such that 
if $(M^3, g)$ is a closed $3$-manifold satisfying $|K_g| \leq \Lambda $, $R_g \geq 1$ and $\diam(M, g) \geq D_0$, then there exists a uniform positive number $r_0= r_0(\Lambda )$ depending only on $\Lambda $ such that the injectivity radius of $(M,g)$ is bounded from below by $r_0$. 

	In general, if we drop the diameter assumption, there exists closed $3$-manifolds with uniformly bounded sectional curvature and uniformly positive scalar curvature, but arbitrarily small injectivity radius. Some examples are given by product metrics $ds_2^2 + \varepsilon ^2 ds_1^2$ on $S^2 \times S^1$ with small $\varepsilon >0$, and collapsing Berger $3$-spheres.

\end{rmk}

\begin{rmk}
	The main result does not hold in higher dimensions. For example, we can consider $S^2 \times \mathbb{R}^2$ with product metric  $g=g_{S^2}+g_{\mathbb{R}^2}$, where $(\mathbb{R}^2, g_{\mathbb{R}^2})$ is a complete metric with bounded curvature, and the injectivity radius goes to zero at infinity in $\mathbb{R}^2$, and $(S^2, g_{S^2})$ is a constant curvature metric. We can choose $g_{S^2}$ with big enough curvature such that $g$ has uniformly positive scalar curvature and bounded sectional curvature, but the injectivity radius does not have a uniform lower bound.
\end{rmk}

\textbf{Acknowledgements.} The author would like to thank his advisor Prof. Xiuxiong Chen for the encouragements. The author thanks Jian Wang and Shaosai Huang for careful reading and helpful comments, and Jian Wang for helpful discussions on positive scalar curvature and informing the author some corollaries. He also thanks the referee for helpful comments and suggestions.

\section{Collapsing with bounded curvature}
In this section, following \cite{Fukaya87, Fukaya88}, we talk about the structure around sufficiently collapsing part of three manifolds with bounded curvature.

Assume there is a sequence of complete noncompact orientable three manifolds $(M_i, g_i, p_i)$ with sectional curvature uniformly bounded $| K_{g_i}| \leq \Lambda $ and injectivity radius $\inj(p_i)$ satisfying $$
\lim_{i\to \infty} \inj(p_i)=0.
$$ 

By Cheeger-Gromov compactness theorem \cite{Cheeger70, Gromov07}, we can take a convergent subsequence in the pointed Gromov-Hausdorff topology. For simplicity, we abuse the notation and write $(M_i, d_i, p_i)$ for a convergent subsequence, that is $$(M_i, d_i, p_i) \to (X, d, o),$$ where $(X,d)$ is a complete Alexandrov space with curvature bounded below and $0\leq \dim X\leq 3$. By results in \cite{Cheeger70, CheegerColding97}, the assumption that $\inj(p_i)\to 0$ implies the sequence is collapsing, i.e. $\dim X<3$; the noncompactness assumption implies that $\dim X >0$. Namely, $\dim X=1$ or $2$.

\begin{prop}\label{dimX=2}
	If $\dim X=2$, then $(X,d)$ is a Riemannian orbifold without boundary. 
\end{prop}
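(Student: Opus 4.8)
The plan is to invoke Fukaya's frame-bundle argument for the orbifold structure and then rule out boundary using the orientability of the $M_i$. We already know $X$ is a $2$-dimensional Alexandrov space, hence a topological surface possibly with boundary; the point is to promote this to a Riemannian orbifold structure and to exclude boundary. Since the $M_i$ are orientable, let $\pi_i\colon FM_i\to M_i$ be the bundle of oriented orthonormal frames, a principal $SO(3)$-bundle, with the natural metric $\hat g_i$ built from $g_i$ and the Levi-Civita connection: $SO(3)$ then acts freely and isometrically on $(FM_i,\hat g_i)$, every orbit is isometric to a fixed round $SO(3)$, and $\pi_i$ is a Riemannian submersion. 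After a harmless preliminary smoothing of the $g_i$ (which changes neither the collapse nor the Gromov--Hausdorff limit) the metrics $\hat g_i$ have curvature bounded in terms of $\Lambda$, and by the structure theory of collapsing with bounded curvature \cite{Fukaya87, Fukaya88, CFG92} a subsequence of $(FM_i,\hat g_i)$ converges equivariantly to $(Y,\hat g_\infty)$, where $Y$ is a Riemannian manifold carrying an effective isometric $SO(3)$-action all of whose isotropy groups are \emph{finite}, and $(X,d)$ is isometric to $Y/SO(3)$. (Morally: the $SO(3)$-orbits in $FM_i$ keep a fixed positive size while $M_i$ collapses transverse to them, so no continuous isotropy survives in the limit.) By the slice theorem, finiteness of the isotropy forces every orbit to be $3$-dimensional, so $\dim Y=\dim X+3=5$, and near an orbit with isotropy $H=SO(3)_y$ we have $Y\cong SO(3)\times_H D^2$ with $H$ acting linearly on the $2$-disk $D^2$ through a slice representation $\rho\colon H\hookrightarrow O(2)$, whence $X$ is locally isometric to $D^2/\rho(H)$. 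Thus $X$ is a $2$-dimensional Riemannian orbifold, and its possible boundary points are exactly the mirror-edge and corner-reflector points, where $\rho(H)\subset O(2)$ contains a reflection.

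To see that no such points occur, pass to the corresponding structure on $M_i=FM_i/SO(3)$: over a neighbourhood of a putative mirror-edge point of $X$ the collapse presents $M_i$ locally as $(S^1\times D^2)/\Gamma$, where $\Gamma$ is a finite group acting on the collapsing circle fibres and on $D^2$, with some $\gamma\in\Gamma$ reflecting $D^2$. There are three cases for how $\gamma$ acts on the $S^1$-factor. If $\gamma$ acts trivially on $S^1$, then the fixed set of $\gamma$ is a hypersurface and the quotient is $S^1\times(D^2/\langle\gamma\rangle)=S^1\times(\text{half-disk})$, which has nonempty boundary. If $\gamma$ acts on $S^1$ by a nontrivial rotation, then $\Gamma$ acts freely and the quotient is a closed manifold, but $\gamma$ is orientation-reversing (it reverses $D^2$ and preserves $S^1$), so the quotient is non-orientable. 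If $\gamma$ acts on $S^1$ by a reflection, then $\gamma$ has fixed points of codimension two and the quotient carries a cone singularity, hence is not a smooth manifold. Each alternative contradicts that $M_i$ is a smooth orientable manifold without boundary (and the same trichotomy applies at a corner-reflector point, with $\langle\gamma\rangle$ replaced by the relevant dihedral group). Therefore $X$ is a closed Riemannian $2$-orbifold without boundary.

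The one genuinely hard step is the input used in the first paragraph: that the frame-bundle limit $Y$ is a smooth manifold on which the limiting $SO(3)$-action has only finite isotropy, which is exactly the substance of the collapsing structure theory we cite. Everything afterwards — the orbifold description via the slice theorem and the exclusion of mirror edges and corner reflectors via the orientability of $M_i$ — is elementary.
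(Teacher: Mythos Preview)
Your approach is correct but takes a genuinely different route from the paper. The paper works directly with local pseudo-groups: for $q_0\in X$ it pulls back the $g_i$ via $\exp_{q_i}$ to a fixed ball $B(0,\varepsilon)\subset\mathbb{R}^3$, passes the local fundamental groups $G_i$ to a $1$-dimensional Lie group germ $G$ acting on $(B(0,\varepsilon),g_0)$, and then proves \emph{by hand} (the main Claim, via a center-of-mass argument) that the isotropy $H_0$ of $G$ at $0$ is finite; boundary exclusion is then one line, since orientability of $M_i$ forces $H_0\subset SO(2)$, so the local model $D^2/H_0$ has only cone points. You instead package the finite-isotropy statement into the Cheeger--Fukaya--Gromov frame-bundle theorem (a legitimate citation --- the paper's own remark concedes the result is in the literature) and then run a longer trichotomy on the local model $(S^1\times D^2)/\Gamma$ to exclude reflections. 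The paper's route buys self-containment and a one-line no-boundary step; your route buys a cleaner global picture ($X=Y/SO(3)$ with $Y$ smooth) at the price of heavier machinery and a more elaborate boundary argument. Two minor slips worth fixing: ``closed'' in your final sentence should read ``without boundary'' ($X$ is noncompact here), and in Case~2 the quotient is not a closed manifold (it inherits boundary from $\partial D^2$); neither affects the logic. Your case analysis could also be shortened: since $M_i$ is an orientable manifold, $\Gamma$ must act freely and orientation-preservingly on $S^1\times D^2$, which immediately rules out Case~1 (fixed points) and Case~2 (orientation-reversing), leaving only Case~3.
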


\begin{rmk}
	If $X$ has bounded diameter and comes from codimension one collapsing, then the fact that it is a Riemannian orbifold was known in the literature, see Proposition 11.5 in  \cite{Fukaya90}, and Proposition 8.1 in \cite{ShioyaYamaguchi00}. See also \cite{NaberTian08} for the general collapsing case. For reader's convenience, we reproduce the proof and add some details here. 
\end{rmk}

\begin{proof}

	For any $q_0 \in X$, take $q_i\in M_i$ with $q_i \to q_0$. For a small ball $B(0, \varepsilon )\subset \mathbb{R}^3 \cong  T_{q_i}M_i$ with center $0$ and a very small radius $\varepsilon \ll\frac{\pi}{ \sqrt{\Lambda} }$, consider the pull back metrics $\exp _{q_i}^{*}g_i$, which we still denote by $g_i$, then $(B(0, \varepsilon ), g_i)$ has a uniform lower bound on injectivity radius. By Cheeger-Gromov compactness theorem, up to a subsequence we have a metric $g_0$ on $B(0, \varepsilon )$ such that $$(B(0, \varepsilon ), g_i) \to (B(0, \varepsilon ), g_0)$$ in $C^{1,\alpha }$-topology. Take $G_i$ to be the local fundamental group $$G_i= G(q_i, \varepsilon ):= \{ \gamma : \gamma \text{ is a geodesic loop at } q_i \text{ with length } < \varepsilon \} .$$ 
	Then $G_i$ is a finite set and acts on $B(0, \varepsilon )$  by free local isometries and $B(0, \varepsilon ) / G_i = B(q_i, \varepsilon )$. The local group structure $(G_i, *)$ is defined as follows: for $\gamma _1, \gamma _2, \gamma _3 \in G_i$, we put $\gamma _1 * \gamma _2 = \gamma _3$ if $\gamma _1 * \gamma _2$ is well defined and coincides with $\gamma _3$, where $\gamma _1*\gamma _2$ is the Gromov's product defined as the unique geodesic loop in the short homotopy class of $\gamma _1 \cdot \gamma _2$, see \cite{BK81} for more details. Put the set of maps
	$$
	L= \{ \gamma  \in C( B(0, \varepsilon ), B(0, 2\varepsilon ) ): \frac{1}{2}\leq \frac{d_0(\gamma (x), \gamma (y) )}{d_0(x, y)} \leq 2,\ \forall x, y \in B(0, \varepsilon )\} ,
	$$ where $d_0$ is the metric associated to $g_0$. Note that by Arzel\`a-Ascoli theorem, $L$ is a compact set. For large enough $i$, we have $G_i \subset L$. By taking a subsequence, there exists a closed subset $G \subset L$ such that $G_i \to G$, and $G$ acts isometrically on $(B(0, \varepsilon ), g_0)$. It was proved in Section 3 of \cite{Fukaya88} that $G$ is a Lie group germ, which means that $G$ is locally isomorphic to a Lie group and its action on $B(0, \varepsilon )$ is smooth.

	Then passing to a subsequence, there is an equivariant convergent sequence $$(B(0, \varepsilon ), g_i, G_i) \to (B(0, \varepsilon ), g_0, G),$$ and $$(B(0, \varepsilon ), g_0) / G= B(q_0, \varepsilon ).$$ 
	In the case $\dim X=2$, we know $\dim G=1$. 

	Let $H_0$ be the isotropy sub-local group of $G$ at $0$, that is
	$$
	H_0:= \{ \gamma \in G: \gamma (0) =0\} .
	$$ 
	Then $H_0$ is in fact a group. To show this fact, for any $\gamma _1, \gamma _2 \in H_0$, it's enough to prove $\gamma _1 * \gamma _2$ is well defined and lies in $H_0$. Assume $\gamma _1^i\to \gamma _1, \gamma _2^i\to \gamma _2$ with $\gamma _1^i, \gamma _2^i \in G_i$. Since $d_0(\gamma _1(0), 0)= d_0(\gamma _2(0), 0)=0$, we know $d_i(\gamma _1^i(0) , 0), d_i(\gamma _2^i(0) , 0) \to 0$. In particular, the total length of $\gamma _1^i$ and $\gamma _2^i$ is smaller than $\varepsilon $ for all large $i$. Then $\gamma _1^i * \gamma _2^i$ is well defined and lies in $G_i$. Moreover, $d_i(\gamma _1^i * \gamma _2^i (0), 0) \leq d_i(\gamma _1^i(0), 0) + d_i(\gamma _2^i(0), 0)$. By taking a subsequence, $\gamma _1^i * \gamma _2 ^i \to \gamma _1 * \gamma _2 \in H_0$. 

	Let $H_0',G'$ be the identity components of $H_0, G$ respectively. From Section 5 of \cite{Fukaya88}, we know for $\varepsilon $ small enough, $B(q_0, \varepsilon )$ is isometric to $B(0, \varepsilon ) / H_0G'$.

	\begin{claim*}
		$H_0$ is discrete.
	\end{claim*}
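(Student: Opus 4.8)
The plan is to reduce the statement, using compactness of $H_0$, to excluding one degenerate configuration, and then to exclude that configuration by exploiting that the local fundamental groups $G_i$ act \emph{freely}. First, $H_0$ is the isotropy group of $0$ for the isometric action of $G$ on $(B(0,\varepsilon),g_0)$, hence a closed subgroup of the compact set $L$, hence compact. As already noted, $\dim X=2$ gives $\dim G=1$, so $\dim G'=1$ as well. If $H_0$ were not discrete, its identity component $H_0'$ would be a compact connected subgroup of $G$ of positive dimension, so $\dim H_0'=1$; since $H_0'$ is connected it lies in the identity component $G'$, and a positive-dimensional connected Lie subgroup of the connected $1$-dimensional group $G'$ must equal $G'$. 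Thus $G'=H_0'$ would be a compact connected $1$-dimensional group, i.e. $G'\cong S^1$, fixing $0$. So it suffices to show that $G'$ cannot fix $0$.

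Suppose $G'$ fixes $0$. An isometry of the $C^{1,\alpha}$-metric $g_0$ which fixes a point and has identity differential there is the identity on the connected space $B(0,\varepsilon)$, so linearization at $0$ embeds $G'$ into $SO(T_0B(0,\varepsilon))\cong SO(3)$ as a one-parameter group of rotations about a line $\ell\subset T_0B(0,\varepsilon)$. Hence $\Fix(G')=\exp_0(\ell)=:\Sigma$ is a geodesic through $0$, and the $G'$-orbits through points at small distance $\sigma$ from $\Sigma$ are circles of length $O(\sigma)$ linking $\Sigma$. Since $G'\subseteq H_0$ gives $H_0G'=H_0$, the neighborhood $B(q_0,\varepsilon)=B(0,\varepsilon)/H_0$ of $q_0$ in $X$ is, near the image of $0$, a finite quotient of $\R^3/SO(2)$, a half-space; in particular $q_0$ would be a boundary point of $X$. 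This is the configuration that must be ruled out.

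Ruling it out is where the freeness of the $G_i$ enters, and it is the step I expect to be the main obstacle. If $G'$ fixed $0$ then, by the previous paragraph, the quotients $B(q_i,\varepsilon)=B(0,\varepsilon)/G_i$ — boundaryless $3$-manifolds with $|K|\le\Lambda$ — would converge in the pointed Gromov–Hausdorff sense to $B(q_0,\varepsilon)=B(0,\varepsilon)/H_0$, which has a boundary point at $q_0$; but a collapsed limit, with two-sided curvature bounds, of boundaryless $3$-manifolds whose local fundamental groups act freely cannot have a boundary point. I would establish this last assertion through the Cheeger–Fukaya–Gromov nilpotent Killing structure of the collapse \cite{CFG92, Fukaya88}: the $1$-dimensional collapsing direction near $q_i$ is the flow of a local Killing field which cannot vanish, since a zero of it would force its nearby integral curves to be arbitrarily short geodesic loops linking $\Sigma$, impossible when $B(0,\varepsilon)\to B(q_i,\varepsilon)$ is a free local covering; consequently its limiting flow $G'$ cannot fix $0$. (One sees the same thing by rescaling at $q_0$: $C_{q_0}X$ is then a $2$-dimensional flat cone arising as a collapsed, $|K|\to0$, limit of $3$-manifolds with free local fundamental groups, hence as a quotient of $\R^3$ by a free $\R$-action — translations or screw motions — whose quotient is $\R^2$, so $C_{q_0}X$ has no boundary and $q_0$ is interior.) The resulting contradiction shows $H_0$ is discrete; the delicate point is to turn ``the $G_i$ act freely'' into this argument rigorously — it is freeness, not merely discreteness, of the $G_i$ that makes it work.
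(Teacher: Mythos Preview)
Your reduction is correct and paints a clean geometric picture: if $H_0$ were not discrete then $H_0'=G'\cong S^1$ fixes $0$, and $q_0$ would sit on the boundary of the quotient. The paper, however, takes a different and more elementary route that never leaves the level of the pseudogroups $G_i$ and their limit $G$, and in particular never needs to interpret this boundary point or invoke CFG.

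From ``$H_0$ not discrete'' the paper deduces (as you do) that a neighborhood of $e$ in $G$ fixes $0$, and then extracts a \emph{displacement gap}: there is $\delta_0>0$ so that every $\alpha\in G$ with $d_0(0,\alpha(0))\le\delta_0$ actually fixes $0$, and this gap passes to $G_i$ for large $i$ in the form ``$d_i(0,\gamma_i(0))<\delta_0$ forces $d_i(0,\gamma_i(0))<\delta_1$'' for any prescribed $\delta_1<\delta_0$. Hence any nontrivial $\gamma_i\in G_i$ with displacement $<\delta_0$ has \emph{all} powers $\gamma_i^n$ with displacement $<\delta_1$. Since $G_i$ is finite, $\gamma_i$ has finite order $n_i$; the Buser--Karcher center of mass of the finite orbit $\{\gamma_i^j(0)\}_{j=1}^{n_i}$ in $B(0,\delta_1)$ is then a fixed point of $\gamma_i$, contradicting freeness of the deck action. (The remaining case $G_i(\delta_0)=\{e\}$ makes $B(q_0,\delta_0)$ three-dimensional, contradicting collapse.) This is short and uses nothing beyond the equivariant convergence already set up.

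Your step~3 is where your argument hides its work, and as you say, it is the delicate point. The CFG Killing-field sketch needs care: the local $S^1$-action on $M_i$ \emph{can} have exceptional orbits (that is precisely what produces the orbifold points of $X$), so ``the Killing field cannot vanish because the covering is free'' is not the right formulation --- what must be excluded is a genuine fixed point, and to pull that out of freeness of the deck group $G_i$ on $B(0,\varepsilon)$ one is led back to a displacement/center-of-mass argument of exactly the paper's type. The tangent-cone sketch has the same gap: showing the limiting $\R$-action on $\R^3$ is by translations or screw motions rather than rotations again requires transferring freeness of the discrete $G_i$ to the one-parameter limit, which is not automatic. So your outline is sound in spirit, but the paper's center-of-mass trick short-circuits the geometric detour you propose.
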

	\begin{proof}[Proof of the claim]
		We argue it by contradiction and assume that $H_0$ is not discrete. Then $H_0$ contains a one dimensional Lie group germ. Since $\dim G=1$, $H_0$ and $G$ has the same Lie group germ at identity. Hence there exists a small neighbourhood $U$ of the identity $e$ in $G$ such that $\forall \alpha \in U$, $\alpha (0)=0$. 

		Notice that there is a small $\delta _0>0$ such that $\{\alpha  \in G: d_0(0, \alpha (0) ) \leq  \delta _0\} \subset H_0$. If not, then there exist $\delta _j\to 0$ and a sequence $\alpha _j \in G$ such that $d_0(0,\alpha _j(0) ) \leq \delta _j $ but $\alpha _j(0) \neq 0$. Then up to a subsequence, assume $\alpha _j \to \alpha _\infty \in G$. So $\alpha _\infty(0) =0$. Note $\alpha _\infty^{-1}\cdot \alpha _j \in U$ for large $j$ implies $\alpha _\infty^{-1}\cdot \alpha _j(0)=0$. So $\alpha _j(0) = \alpha _\infty( \alpha _\infty^{-1}\cdot \alpha _j(0) )= \alpha _\infty(0)=0$, a contradiction. 

		Similarly, for any small $0<\delta _1 <\delta _0$, we have that for any $\gamma _i \in G_i$, if $d_i(0, \gamma _i(0))\leq  \delta _0$, then $d_i(0, \gamma _i(0) ) < \delta _1$ for all large $i \geq i_0$ depending on $\delta _0, \delta _1$. If not, then there is a sequence $\gamma _i$ satisfying $\delta _1 \leq d_i(0, \gamma _i(0) ) \leq  \delta _0$. Up to a subsequence, assume $\gamma _i \to \gamma _0 \in G$. Then $\delta _1 \leq d_0(0, \gamma _0(0) ) \leq \delta _0$. But we just proved that $d_0(0, \gamma _0(0) ) \leq \delta _0$ implies $\gamma _0(0)=0$, which is a contradiction. 

		Now consider the local group $$G_i(\delta _0):=\{\gamma \in G_i: d_i(0, \gamma (0) ) \leq \delta _0\} .$$ There are two cases:

		(1) $G_i(\delta _0)=\{e\} $. In this case, $B(q_0, \delta _0)= (B(0, \delta _0),g_0)$ has dimension three and is thus noncollapsing, a contradiction.

		(2) $G_i(\delta _0) \neq \{e\} $. Assume that there is a non-identity element $\gamma_i \in G_i(\delta _0)$. Denote the uniform lower bound of injectivity radius on $(B(0, \varepsilon ), g_i)$ by $r_0$, and choose $\delta _1: = \min \{\frac{1}{2}\delta _0, \frac{1}{2}r_0, \frac{\pi}{4 \sqrt{\Lambda } }\} $. Let $i_0$ be a large number depending on $\delta _0, \delta _1$ in previous paragraph and $i \geq i_0$. 
		Inductively, for any $n \in \mathbb{Z}$ and $n \geq 2$, assume $d_i(0, \gamma _i^{k}(0) ) \leq \delta _0 $ for all $1 \leq k \leq n-1$. Then $d_i(0, \gamma _i^{n-1}(0) ) \leq \delta _0$ implies that  $$
		d_i(0, \gamma _i^{n-1}(0) ) < \delta _1 \leq \frac{1}{2}\delta _0.
		$$ 
		By triangle inequality, we have
\begin{align*}
	d_i(0, \gamma _i^n(0) )&\leq d_i(0, \gamma _i^{n-1}(0) ) + d_i(\gamma _i^{n-1}(0), \gamma _i^n(0) )\\
			       &= d_i(0, \gamma _i^{n-1}(0) ) + d_i(0, \gamma _i(0) )\\
			       & \leq \frac{1}{2}\delta _0 + \frac{1}{2}\delta _0\\
			       &= \delta _0.
\end{align*}
So for any $n \in \mathbb{Z}_+$, $\gamma _i ^{n} \in G_i(\delta _0)$. But $G_i(\delta _0)$ is a finite set, so there exists an $n_i$ such that $\gamma_i ^{n_i}=e$. 

		Then we can use the technique of center of mass to show that $\gamma _i =e$. By our choice of $\delta _1$, from Section 8 of \cite{BK81}, there is a unique minimum point $q_c$ of  $P(x)= \frac{1}{2}\sum_{j=1}^{n_i}d_i(x, \gamma _i^j(0) )$ in $B(0, \delta _1)$. Since $\gamma _i$ is an isometry and $\gamma _i^{n_i}=e$, $P(q_c)=P(\gamma _i(q_c) )$. From the uniqueness of $q_c$, we know $\gamma _i(q_c) =q_c$. Note $\gamma _i$ behaves like deck transformation, so $\gamma _i$ has a fixed point only when $\gamma _i=e$. This gives a contradiction and completes the proof of the claim.
\end{proof}
Note the fact that $H_0$ is a closed subset of $L$ and thus compact. From the claim, we know $H_0$ is a finite group. Then the orbit $G\cdot 0 \cong G / H_0$ is one dimensional. By standard slice theorem, we know $B(q_0, \varepsilon )$ is isometric to $S_0 / H_0$, where $S_0 = \exp _0(V)$ for some small ball $V \subset \mathbb{R}^2 \cong (T_0(G\cdot 0))^\perp \subset T_0B(0, \varepsilon )$, and via $\exp _0$, $H_0$ acts isometrically on $V$. Note that $M_i$ are orientable and $G_i$ preserve orientations, so $G$ also preserves orientation, which implies that $H_0 \subset SO(2)$, i.e. $H_0$ is a finite cyclic group. This implies that $q_0$ is either a regular point or an interior Riemannian orbifold point.
\end{proof}

\begin{prop}\label{seifert}
	If $\dim X=2$, then for any fixed number $r>1$, for all sufficiently large $i$, there exists a Seifert fibered space $\Omega _i$ with smooth boundaries $\partial \Omega _i = \partial _- \sqcup \partial _+$ such that  $$
	d_{g_i}(\partial _-, \partial _+) \geq r.
	$$  
\end{prop}

\begin{proof}
	Under the same notations as above, if $\dim X=2$, then from Proposition \ref{dimX=2} we have a decomposition $S_1(X) \subset X$, where $S_1(X)$ consists of discrete orbifold singularities and $X\setminus S_1(X)$ is a smooth surface. 

	By \cite[Theorem 0.12, Theorem 10.1]{Fukaya88}, there is a continuous map $$f_i: B(p_i, 4r) \to X$$ such that $f_i(p_i)=o$ and 
	$$
	|d_i(x, y) - d(f_i(x), f_i(y) )| \leq C(1+r) \varepsilon _i,
	$$ where $C>0$ is a uniform constant and $\varepsilon_i\to 0$ as $i\to \infty$. Moreover,
	the restriction of $f_i$ on $f_i^{-1}(X\setminus S_1(X) )$ is a smooth fiber bundle with fiber diffeomorphic to $S^1$, and $\forall p \in S_1(X) \cap B(o, 4r)$, $f_i^{-1}(p)$ is diffeomorphic to $S^1 / H_p$ for some cyclic group $H_p= \mathbb{Z}_m$. Then for any small disk $D \subset B(o, 4r)$ with $D \cap S_1(X) = \{p\} $, an $m$-fold covering of $f_i^{-1}(D)$ is diffeomorphic to $D \times S^1$. So 
	$$\Omega _i:= f_i^{-1}(B(o, 3r)\setminus B(o, r) )$$ is a Seifert fibered space over orbifold $B(o, 3r)\setminus B(o, r)$.

	Set $\partial _- := f_i^{-1}(\partial B(o, r) )$ and $\partial _+:= f_i^{-1}(\partial B(o, 3r) )$. Perturbing $B(o, 3r)\setminus B(o,r)$ by $B(o, 3r+\varepsilon )\setminus B(o, r-\varepsilon )$ for some $0< \varepsilon \ll 1$ if necessary, we can assume $\partial ( B(o, 3r)\setminus B(o,r) )$ consists of no orbifold singularities such that $\partial \Omega _i= \partial _- \sqcup \partial _+$ is smooth. Since $$
	d(o, f_i(\partial _-) )\leq r,\ d(o, f_i(\partial _+) ) \geq 3r,
	$$ 
	we know for all large enough $i$,
	$$
	d_i(\partial _-, \partial _+) \geq 2r - C (1+r) \varepsilon _i \geq r.
	$$ 

\end{proof}

\begin{prop}\label{dimX=1}
	If $\dim X=1$, then for any fixed number $r>1$, there is a submanifold  $T_i \times (-1, 1)\subset M_i$ with $d_i(T_i \times \{-1\} , T_i \times \{1\} ) \geq r$, where $T_i$ is diffeomorphic to a torus $T^2$ or a Klein bottle $K^2$.
\end{prop}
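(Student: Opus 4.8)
The plan is to mimic the proof of Proposition \ref{seifert}, applying Fukaya's fibration theorem, but now over a long interval inside the one-dimensional limit. First I would pin down the shape of $X$: a complete one-dimensional Alexandrov space is a complete $1$-manifold with (possibly empty) boundary carrying a length metric, and since $M_i$ is noncompact so is $X$; hence $X$ is isometric to $\mathbb{R}$ or to $[0,\infty)$. In either case $X$ contains intervals of arbitrarily large length whose closures consist only of interior (manifold) points, so I would fix $a<b$ with $b-a\ge 2r$ such that the closed interval $[a-1,b+1]$ lies in the interior of $X$, and put $J:=(a-1,b+1)$.

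Next I would invoke the fibration theorem of \cite{Fukaya87, Fukaya88} (see also \cite{CFG92}): since each point of $\overline{J}$ is a regular point of $X$, for all sufficiently large $i$ there is a smooth map $f_i\colon f_i^{-1}(J)\to J$ that is a locally trivial fiber bundle with fiber a closed two-dimensional infranilmanifold, is an $\varepsilon_i$-Gromov--Hausdorff approximation onto $J$ with $\varepsilon_i\to 0$, and has fibers of diameter tending to $0$. The only closed two-dimensional infranilmanifolds are $T^2$ and $K^2$, so the fiber $T_i$ is one of these (and, since $M_i$ is orientable, in fact $T_i\cong T^2$, though this is not needed). Because $J$ is an interval, hence contractible, the bundle $f_i^{-1}(J)\to J$ is trivial, so a trivialization identifies the open submanifold $f_i^{-1}((a,b))\subset M_i$ with $T_i\times(-1,1)$, its two ends corresponding to the slices $\Sigma_i^{-}:=f_i^{-1}(a)$ and $\Sigma_i^{+}:=f_i^{-1}(b)$, each diffeomorphic to $T_i$.

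Finally I would estimate the width. Since $f_i$ is an $\varepsilon_i$-Gromov--Hausdorff approximation onto $J$ and its fibers shrink to points, a standard Gromov--Hausdorff argument gives
$$
\lim_{i\to\infty} d_i\big(\Sigma_i^{-},\Sigma_i^{+}\big)=d_X(a,b)=b-a\ge 2r,
$$
so $d_i(\Sigma_i^{-},\Sigma_i^{+})\ge r$ for all large $i$. The submanifold $f_i^{-1}((a,b))\cong T_i\times(-1,1)$, together with this bound on the distance between its ends, is exactly the long torical band asserted.

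The only substantive input is the fibration theorem in the second step: one must know that over the regular one-dimensional part of the limit the collapse is a genuine fiber bundle with infranilmanifold fiber, not merely a metric approximation, which is where the two-sided curvature bound is essential. Everything else --- triviality of a bundle over an interval, the classification of $2$-dimensional infranilmanifolds, and convergence of distances between shrinking fibers over fixed base points --- is routine, and the noncompactness of $X$ is what supplies an arbitrarily long interval $J$.
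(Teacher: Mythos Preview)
Your proposal is correct and follows essentially the same route as the paper: identify $X$ as $\mathbb{R}$ or $[0,\infty)$, pick a long interval of regular points, apply Fukaya's fibration theorem to obtain a fiber bundle with $T^2$ or $K^2$ fiber, trivialize it over the interval, and read off the width from the almost-submersion/GH-approximation property. The paper cites Theorems~0.12 and~10.1 of \cite{Fukaya88} directly rather than phrasing the fiber as an infranilmanifold, and it centers the interval at a point $q$ with $d(o,q)=r+1$, but these are cosmetic differences.
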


\begin{proof}
	If $\dim X=1$, then $X=\mathbb{R}$ or $[0,\infty)$. In both cases, we can choose $q\in X$ with $d(o,q)=r+1$ and $B(q, r)\subset X$ consists of regular points. Then by \cite[Theorem 0.12, Theorem 10.1]{Fukaya88}, for large enough $i$, there is a map $$f_i: B(p_i, 2r+2)\to X$$ such that the restriction on $f_i^{-1}(B(q, r) )$ is a fiber bundle, whose fiber is diffeomorphic to a torus or Klein bottle with arbitrary small diameter and $f_i$ is an almost Riemannian submersion. 
		So $$\Omega _i= f_i^{-1}(B(q, r) ) \cong T_i \times (-1,1)$$ is a desired submanifold. Note $\lim_{i\to \infty}d_i(T_i \times \{-1\} , T_i \times \{1\} )= 2r.$

\end{proof}
\section{Proof of the theorem}\label{main-proof}
Recall that a Riemannian band is a Riemannian manifold $(Y,\partial _{\pm})$ with two distinguished disjoint non-empty subsets in the boundary $\partial Y$, denoted by $\partial _-$ and $\partial _+$. A band is called proper if $\partial _{\pm}$ are unions of connected components of $\partial Y$ and $\partial _- \cup \partial _+ = \partial Y$. The width of a Riemannian band $(Y, \partial _{\pm})$ is defined as $d(\partial _-, \partial _+).$ Gromov proved the following width inequalities in \cite{Gromov19} by using $\mu $-bubbles.
\begin{prop}[$\frac{2\pi}{n}$-Inequality]\label{n-dim-width-ineq}
	Let $(Y^n, \partial _{\pm})$ be a proper compact Riemannian band of dimension $n \leq 7$ with scalar curvature $R \geq 1$. If no closed hypersurface in $Y$ which separates $\partial _-$ from $\partial _+$ admits a metric with positive scalar curvature, then $$
	d(\partial _-, \partial _+) \leq \frac{2\pi}{n}\cdot \sqrt{n(n-1)} .
	$$ 
\end{prop}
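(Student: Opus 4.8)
The plan is to use Gromov's $\mu$-bubble technique, arguing by contradiction: assume $d(\partial_-, \partial_+) > \frac{2\pi}{n}\sqrt{n(n-1)}$ and construct a closed hypersurface separating $\partial_-$ from $\partial_+$ that carries positive scalar curvature, contradicting the hypothesis. First I would fix a smooth function $\rho: X \to \mathbb{R}$ approximating the signed distance to $\partial_-$ (mollified so that it is smooth and still has $\rho^{-1}(0)$ near $\partial_-$, $\rho^{-1}(\ell)$ near $\partial_+$, where $\ell := d(\partial_-,\partial_+)$). The key is to choose a warping weight $h: (0,\ell) \to \mathbb{R}$ with $h \to +\infty$ as the argument $\to 0^+$ and $h \to -\infty$ as the argument $\to \ell^-$; the precise choice is $h(t) = -\sqrt{n(n-1)}\,\tan\!\big(\tfrac{t - \ell/2}{\sqrt{n(n-1)}}\big)$-type barrier, which is the unique profile making the relevant ODE $2h' + \tfrac{n}{n-1}h^2 \ge$ (something controllable) hold; this profile exists precisely when $\ell \le \pi\sqrt{n(n-1)}\cdot \tfrac{2}{n} \cdot \tfrac{n}{2}$, i.e. exactly at the claimed threshold, so the strict inequality assumption gives us the needed room.

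Next I would set up the weighted (prescribed-mean-curvature) functional. For a Caccioppoli set $\Omega$ with $\partial_- \subset \Omega$ and $\partial_+ \cap \overline{\Omega} = \emptyset$, consider
\begin{equation*}
\mathcal{A}(\Omega) = \int_{\partial^* \Omega} h\big(\rho\big)\, d\mathcal{H}^{n-1} - \int_\Omega h'\big(\rho\big)\,|\nabla \rho|\, d\mathcal{H}^n \, .
\end{equation*}
One minimizes $\mathcal{A}$ in this class; the barrier behavior of $h$ near the two ends forces a minimizer $\Omega_0$ to exist with $\Sigma := \partial^* \Omega_0$ a smooth closed hypersurface (smooth because $n \le 7$, by Federer's dimension-reduction / the regularity theory for almost-minimizing boundaries) lying strictly in the interior, and $\Sigma$ separates $\partial_-$ from $\partial_+$. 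The first variation shows $\Sigma$ has mean curvature $H_\Sigma = h(\rho)$ along $\Sigma$.

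Then comes the heart of the argument: the second variation. Stability of $\Sigma$ for $\mathcal{A}$ gives, for all $\psi \in C^\infty(\Sigma)$,
\begin{equation*}
\int_\Sigma |\nabla_\Sigma \psi|^2 \ge \int_\Sigma \Big( \tfrac{1}{2}R_X - \tfrac{1}{2}R_\Sigma + \tfrac{1}{2}|A_\Sigma|^2 + \tfrac{1}{2}H_\Sigma^2 - \langle \nabla h, \nu\rangle \Big)\psi^2 \, ,
\end{equation*}
after using the Gauss equation to trade $\text{Ric}_X(\nu,\nu) + |A|^2$ for $R_X - R_\Sigma + H^2 + |A|^2$. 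Using $R_X \ge 1$, the identity $\langle \nabla h, \nu \rangle = h'(\rho)\langle \nabla \rho, \nu\rangle$ together with $|\langle\nabla\rho,\nu\rangle|\le 1$, and the arithmetic inequality $|A_\Sigma|^2 + H_\Sigma^2 \ge \tfrac{n}{n-1}H_\Sigma^2 = \tfrac{n}{n-1}h^2$ (from Cauchy–Schwarz on the eigenvalues of $A$, since $\Sigma$ is $(n-1)$-dimensional), one bounds the potential below by $\tfrac12 R_\Sigma^{-1}$-reciprocally... more precisely, by testing with $\psi \equiv 1$ or with a conformal factor, one shows the operator $-\Delta_\Sigma + \text{(potential)}$ is positive provided $2h' + \tfrac{n}{n-1}h^2 + 1 > 0$ pointwise, which is exactly the ODE the chosen profile $h$ satisfies under the strict width bound. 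A standard conformal change on $\Sigma$ (dimension $n - 1 \le 6$, so the conformal Laplacian argument of Schoen–Yau / Kazdan–Warner applies) then upgrades positivity of this operator to the existence of a positive-scalar-curvature metric on $\Sigma$ — contradicting the hypothesis.

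The main obstacle I expect is the regularity and existence theory for the $\mu$-bubble: one must confirm that the weighted functional $\mathcal{A}$ actually admits a minimizer in the prescribed class with the boundary staying in the interior (this is where the blow-up of $h$ at the ends is used as a barrier, and it requires care that the second term $\int_\Omega h'|\nabla\rho|$ does not destroy lower semicontinuity or coercivity), and that $\partial^*\Omega_0$ is smooth and closed — invoking Federer's dimension reduction, which is exactly why the hypothesis restricts to $n \le 7$. The second delicate point is making the constant $\frac{2\pi}{n}\sqrt{n(n-1)}$ come out sharp, i.e. solving the Riccati-type ODE for $h$ on the maximal interval and checking the length of that interval matches the stated bound; this is a one-variable computation but the numerology must be tracked exactly.
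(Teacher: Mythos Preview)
The paper does not give its own proof of this proposition: it is simply quoted from Gromov \cite{Gromov19}, with only the remark that the argument proceeds ``by using $\mu$-bubbles.'' Your outline follows precisely that $\mu$-bubble strategy (contradiction, warped barrier $h$, stability of the minimizer, Gauss equation plus conformal change on $\Sigma$), so in spirit you are reconstructing what the paper merely cites rather than departing from it.

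That said, there is a concrete slip in your set-up. The $\mu$-bubble functional should be
\[
\mathcal{A}(\Omega)=\mathcal{H}^{n-1}(\partial^{*}\Omega)-\int_{\Omega} h\,d\mathcal{H}^{n},
\]
with the perimeter term \emph{unweighted}; it is this functional whose first variation gives $H_{\Sigma}=h$ on $\Sigma$. In the version you wrote, with $h(\rho)$ multiplying the area and $h'(\rho)|\nabla\rho|$ in the bulk, the Euler--Lagrange equation does not reduce to $H_{\Sigma}=h(\rho)$, so the stability computation that follows would not feed into the Riccati inequality as you claim. Relatedly, your explicit profile $h(t)=-\sqrt{n(n-1)}\tan\bigl((t-\ell/2)/\sqrt{n(n-1)}\bigr)$ does not solve $2h'+\tfrac{n}{n-1}h^{2}+1=0$; the correct solution is, up to translation, $h(t)=-\sqrt{\tfrac{n-1}{n}}\,\tan\!\bigl(\tfrac{1}{2}\sqrt{\tfrac{n}{n-1}}\,t\bigr)$, whose maximal interval of definition has length exactly $\tfrac{2\pi}{n}\sqrt{n(n-1)}$, which is where the sharp constant comes from. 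With these two corrections your sketch becomes the standard argument; the issues you flag at the end (existence and interior regularity of the minimizer from the barrier behaviour of $h$, smoothness from $n\le 7$, and the Schoen--Yau conformal step on $\Sigma$) are exactly the substantive points, and are handled in the reference the paper cites.
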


As a corollary, width inequalities hold for torical bands, see also \cite{Gromov18}.

\begin{prop}\label{torical-band}	
	For a Riemannian torical band $ T^{n-1}\times [-1,1]$ with dimension $n \leq 7$ and scalar curvature $R \geq 1$,
	$$
	d(T^{n-1}\times \{-1\}, T^{n-1}\times \{1\}) \leq \frac{2\pi}{n}\cdot \sqrt{n(n-1)} .
	$$ 
\end{prop}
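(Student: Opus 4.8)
The plan is to deduce this from the $\tfrac{2\pi}{n}$-Inequality (Proposition~\ref{n-dim-width-ineq}), applied to the proper compact band $X = T^{n-1}\times[-1,1]$ with $\partial_{\pm} := T^{n-1}\times\{\pm 1\}$, so that $\partial_-\cup\partial_+ = \partial X$. Granting that proposition, everything reduces to checking its topological hypothesis for this band: that no closed hypersurface of $X$ separating $\partial_-$ from $\partial_+$ admits a metric of positive scalar curvature. Once this is verified, Proposition~\ref{n-dim-width-ineq} yields $d\bigl(T^{n-1}\times\{-1\},\,T^{n-1}\times\{1\}\bigr)\le \tfrac{2\pi}{n}\sqrt{n(n-1)}$, which is exactly the claim, and no further geometric input is needed.

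To check the hypothesis I would take an embedded closed hypersurface $\Sigma\subset\operatorname{int}(X)$ separating $\partial_-$ from $\partial_+$ and consider the connected component $\Omega$ of $X\setminus\Sigma$ containing the (connected) boundary piece $\partial_-$. Since $\Sigma$ misses a collar of $\partial_+$, its closure $\bar\Omega$ is a compact manifold with $\partial\bar\Omega = \partial_-\sqcup\Sigma_0$, where $\Sigma_0\ne\emptyset$ is a union of connected components of $\Sigma$; moreover $\bar\Omega$, being a codimension-zero submanifold of the orientable manifold $X$, is orientable, and hence so is $\Sigma_0$. Thus $\bar\Omega$ is an oriented cobordism inside $X$ between $\partial_-$ and $\Sigma_0$, so $[\Sigma_0]=\pm[\partial_-]$ in $H_{n-1}(X;\mathbb{Z})$. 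Composing the inclusion with the projection $\pi\colon X\to T^{n-1}$ — a homotopy equivalence restricting to the identity on $\partial_-$ — gives $\pi_*[\Sigma_0]=\pm[T^{n-1}]\ne 0$ in $H_{n-1}(T^{n-1};\mathbb{Z})\cong\mathbb{Z}$. Since this degree is the sum of the degrees of $\pi$ on the components of $\Sigma_0$, at least one component $\Sigma_{00}$ of $\Sigma$ admits a map onto $T^{n-1}$ of nonzero degree.

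It then remains to invoke the non-existence theorem of Schoen--Yau~\cite{SchoenYau79} (or Gromov--Lawson~\cite{GromovLawson83}, via enlargeability): a closed orientable manifold of dimension $\le 7$ admitting a map of nonzero degree onto a torus of the same dimension carries no metric of positive scalar curvature. Applied to $\Sigma_{00}$ (of dimension $n-1\le 6$), this shows $\Sigma_{00}$, and therefore $\Sigma$ — a disjoint union carries positive scalar curvature only if each component does — admits no such metric, which is what was needed. I expect the only delicate point to be the homological bookkeeping in the middle step, i.e.\ converting the separation hypothesis into the statement that some component of $\Sigma$ is homologically essential in $T^{n-1}$; the orientability of $\Sigma_0$ is free, and the dimensional restriction $n\le 7$ enters only through the cited obstruction result.
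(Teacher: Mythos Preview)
Your proposal is correct and follows exactly the route the paper indicates: the paper simply records this proposition as a corollary of Proposition~\ref{n-dim-width-ineq} (citing \cite{Gromov18}) without spelling out the verification of the topological hypothesis, and you have supplied precisely those details---the homological argument that any separating hypersurface has a component mapping with nonzero degree to $T^{n-1}$, together with the Schoen--Yau/Gromov--Lawson obstruction. There is nothing to add.
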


Note that a closed orientable surface which admits a metric with positive scalar curvature is  diffeomorphic to a $2$-sphere $S^2$. Taking $n=3$, we have the following corollary.

\begin{prop}\label{3-dim-width-ineq}
	Let $(Y^3, \partial _{\pm})$ be a three dimensional proper compact orientable Riemannian band with scalar curvature $R \geq 1$. There exists a positve number $C_0=\frac{2 \sqrt{6} \pi}{3}$ such that if $d(\partial _-, \partial _+) \geq C_0$, then there exist $2$-spheres separating $\partial _-$ from $\partial _+$.
\end{prop}

Now we can prove the main theorem.
\begin{proof}[Proof of Theorem \ref{main-thm}]
	Assume by contradiction that there exists a sequence of complete noncompact three manifolds $(M_i, g_i, p_i)$, satisfying $|K_{g_i}| \leq \Lambda $ and $R_{g_i}\geq 1$, such that $\inj(p_i) \to 0$. Taking a double covering if necessary, we assume $M_i$ are orientable. As in Section 2, up to a subsequence, we assume $(M_i, g_i, p_i)\to (X, d, o)$ in the pointed Gromov-Hausdorff topology, with $\dim X =1$ or $2$.

	If $\dim X=2$, from Proposition \ref{seifert}, we know for a big fixed radius $r>C_0+100$, where $C_0$ is the constant in Proposition \ref{3-dim-width-ineq}, there exists a Seifert fibered space $\Omega_i $ with two disjoint non-empty smooth boundaries $\partial \Omega _i = \partial _- \sqcup \partial _+$ such that
	$$
	d_{g_i}(\partial _-, \partial _+) \geq r.
	$$ 
	Applying Proposition \ref{3-dim-width-ineq} to $(\Omega _i, \partial _{\pm})$, by our choice of $r$, we know there are $2$-spheres $\{S_k^2\} _{k=1}^N$ in $\Omega _i$ separating $\partial _-$ from $\partial _+$. In particular, we have $[\cup _{k=1}^N S_k^2]= [\partial _-] \neq 0$ in $H_2(\Omega _i)$.

	On the other hand, by \cite[Lemma 3.1]{Scott83}, the universal covering of $\Omega _i$ is $\mathbb{R}^3$, so $\Omega_i $ is $S^2$-irreducible and $[\cup _{k=1}^{N}S_k^2]=0$ in $H_2(\Omega _i)$, which is a contradiction. This completes the proof of the case when $\dim X=2$.

If $\dim X=1$, from Proposition \ref{dimX=1}, for any $r>0$, we know there is a torical band $T \times (-1,1)$ with width bigger than $r$ admitting a positive scalar curvature $R_{g} \geq 1$, where $T$ is either a torus or Klein bottle. Up to a double covering of $T\times (-1,1)$, we can assume $T$ is a torus. Choosing $r$ big enough gives a contradiction to Proposition \ref{torical-band}.
	
\end{proof}

\bibliographystyle{alpha}
\bibliography{./math}

\end{document}